\newcommand{\Hom}{\operatorname{Hom}\nolimits}
\newcommand{\End}{\operatorname{End}\nolimits}
\newcommand{\uHom}{\operatorname{\underline{Hom}}\nolimits}
\newcommand{\Aut}{\operatorname{Aut}\nolimits}
\renewcommand{\Im}{\operatorname{Im}\nolimits}
\renewcommand{\mod}{\operatorname{mod}\nolimits}
\newcommand{\Ker}{\operatorname{Ker}\nolimits}
\newcommand{\Ext}{\operatorname{Ext}\nolimits}
\newcommand{\HH}{\operatorname{HH}\nolimits}
\newcommand{\rad}{\operatorname{rad}\nolimits}
\newcommand{\soc}{\operatorname{soc}\nolimits}
\newcommand{\Irr}{\operatorname{Irr}\nolimits}
\newcommand{\La}{\Lambda}
\newcommand{\V}{\operatorname{V}\nolimits}
\newcommand{\Tr}{\operatorname{Tr}\nolimits}
\newcommand{\Char}{\operatorname{char}\nolimits}
\newtheorem{theorem}{Theorem}[section]
\newtheorem{lemma}[theorem]{Lemma}
\theoremstyle{definition}
\theoremstyle{definition}
\theoremstyle{definition}
\newtheorem*{setup}{Setup}
\theoremstyle{definition}
\theoremstyle{remark}
\theoremstyle{remark}
\theoremstyle{definition}
\begin{document}
\title[The stable AR-quiver of a quantum complete intersection]{The stable Auslander-Reiten quiver of a quantum complete intersection}
\author{Petter Andreas Bergh \& Karin Erdmann}
\address{Petter Andreas Bergh \\ Institutt for matematiske fag \\
NTNU \\ N-7491 Trondheim \\ Norway} \email{bergh@math.ntnu.no}
\address{Karin Erdmann \\ Mathematical Institute \\ 24-29 St.\ Giles \\ Oxford OX1 3LB \\ United Kingdom}
\email{erdmann@maths.ox.ac.uk}

\thanks{The first author was supported by NFR Storforsk grant no.\
167130}

\subjclass[2000]{16E30, 16G70, 16S80, 16U80, 81R50}

\keywords{Quantum complete intersections, Auslander-Reiten quiver}

\begin{abstract}
We completely describe the tree classes of the components of the stable Auslander-Reiten quiver of a quantum complete intersection. In particular, we show that the tree class is always $A_{\infty}$ whenever the algebra is of wild representation type. Moreover, in the tame case, there is one component of tree class $\tilde{A}_{12}$, whereas all the others are of tree class $A_{\infty}$.
\end{abstract}

\maketitle

\section{Introduction}

The Auslander-Reiten quiver of a finite dimensional algebra encapsulates the information on the indecomposable modules and the irreducible maps. Describing these quivers for various algebras is therefore one of the classical problems in representation theory.

In \cite{Webb}, Webb studied the stable Auslander-Reiten quiver of group algebras of finite groups over algebraically closed fields. He showed that the tree class of any stable component is either $A_n$, a Euclidean diagram, or one of the infinite trees $A_{\infty}$, $B_{\infty}$, $C_{\infty}$, $D_{\infty}$ and $A_{\infty}^{\infty}$. A natural question to ask was then: do any of these tree classes never occur? The second author answered this completely in \cite{Erdmann2}, by showing that any stable component of a wild block of a group algebra is of tree class $A_{\infty}$. 

Not much is known in general for arbitrary selfinjective algebras over algebraically closed fields. However, when restricting to modules of finite complexity, some results have been obtained. In \cite{GreenZacharia}, Green and Zacharia showed that for a selfinjective algebra, 
a regular component containing a module of complexity $1$ is either a stable tube or has tree class $A_{\infty}$. For stable components containing modules of higher (finite) complexity, the picture is more complicated. Kerner and Zacharia showed in \cite{KernerZacharia} that the tree class of a non-$\tau$-periodic such stable component is either a Euclidean diagram, or one of the infinite trees  $A_{\infty}$, $D_{\infty}$ and $A_{\infty}^{\infty}$. Moreover, if the component is regular, then the tree class is one of the infinite trees. A similar result was also shown by the second author and coauthors in \cite{EHSST}, where they studied selfinjective algebras having finitely generated cohomology.

In this paper, we completely describe the tree classes of the stable Auslander-Reiten components of the quantum complete intersections. In particular, we show that if such an algebra is wild, which they ``almost always" are, then every stable component is of tree class $A_{\infty}$. Moreover, in the tame case, there is one component of tree class $\tilde{A}_{12}$, whereas all the others are of tree class $A_{\infty}$.

\section{Preliminaries}

Throughout this section, let $\La$ be a finite dimensional algebra over some algebraically closed field. Denote by $\mod \La$ the category of finitely generated left $\La$-modules; all modules encountered are assumed to belong here.

The Auslander-Reiten quiver $\Gamma ( \La )$ of $\La$ (from now on called ``AR-quiver") is a directed graph, whose vertices are the isomorphism classes $[M]$ of the indecomposable $\La$-modules. The edges correspond to the irreducible morphisms; there is an arrow $[M] \to [N]$ in $\Gamma ( \La )$ precisely when there exists an irreducible morphism $M \xrightarrow{f} N$. For every such edge one associates a pair of numbers 
$$[M] \xrightarrow{(a_{MN},b_{MN})} [N]$$
corresponding to the $\End_{\La}(M)$-$\End_{\La}(N)$-bimodule 
$$\Irr_{\La}(M,N) \stackrel{\text{def}}{=} \rad_{\La}(M,N)/ \rad_{\La}^2(M,N)$$ 
of irreducible morphisms. Namely, the number $a_{MN}$ is the length of $\Irr_{\La}(M,N)$ as a right $\End_{\La}(N)$-module, whereas $b_{MN}$ is the length of $\Irr_{\La}(M,N)$ as a left $\End_{\La}(M)$-module. It is customary to omit the pair in the case when both numbers are $1$.

Now suppose the algebra is selfinjective. The stable AR-quiver of $\La$, denoted $\Gamma_s ( \La )$, is the subquiver of $\Gamma ( \La )$ obtained by deleting all the projective modules. By the Riedtmann Structure Theorem (cf.\ \cite{Riedtmann}), given any connected component $\Theta$ of $\Gamma ( \La )$, there exists a directed tree $T$ and an admissible group of automorphisms $G \subseteq \Aut ( \mathbb{Z} T)$, such that $\Theta$ is isomorphic to $\mathbb{Z} T/G$. The admissible group $G$ is uniquely defined up to conjugation in $\Aut ( \mathbb{Z} T)$. Moreover, the graph associated to $T$, that is, the graph obtained from $T$ by replacing each arrow by an edge, is determined uniquely by $\Theta$ up to canonical isomorphism. The isomorphism type of this graph is the \emph{tree class} of the component $\Theta$. 

It is well known that if a connected component $\Theta$ of $\Gamma_s ( \La )$ contains a $\tau$-periodic module (i.e.\ a module $M$ such that $\tau^nM \simeq M$ for some $n>0$), then $\Theta$ itself is $\tau$-periodic, that is, all modules in this component are $\tau$-periodic. In this case, if $\La$ is of infinite representation type, then the component $\Theta$ is a tube by \cite[Main Theorem]{HappelPreiserRingel}, and its tree class is therefore $A_{\infty}$. Thus, in order to determine tree classes of stable components for selfinjective algebras of infinite representation type, it suffices to consider only those components which are not $\tau$-periodic. When doing this, we shall be working with additive functions on components of the stable AR-quiver of $\La$, that is, functions which are additive on AR-sequences. For a given $\La$-module $W$, define a function $d_W \colon \mod \La \to \mathbb{N} \cup \{ 0 \}$ by 
$$d_W(M) \stackrel{\text{def}}{=} \dim \uHom_{\La} (W,M).$$
A priori, this function is not additive on a given stable component. However, the following result from \cite{ErdmannSkowronski} shows that additivity holds whenever no indecomposable summand of $W$ belongs to the component or its shift.

\begin{lemma}\cite[Lemma 3.2]{ErdmannSkowronski}\label{additive}
Let $\La$ be a finite dimensional selfinjective algebra over an algebraically closed field, and $W$ be a $\La$-module. Furthermore, let $\Theta$ be a component of $\Gamma_s ( \La )$, and suppose that no indecomposable summand of $W$ belongs to $\Theta$ or $\Omega_{\La}^1( \Theta )$. Then $d_W$ is an additive function on $\Theta$.
\end{lemma}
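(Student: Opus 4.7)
The plan is to work in the stable module category of $\La$, which, since $\La$ is selfinjective, is triangulated with suspension $\Omega_\La^{-1}$ and morphism spaces $\uHom(-,-)$. Under this structure, the AR-sequence $0 \to \tau M \to E \to M \to 0$ becomes an AR-triangle
$$\tau M \to E \to M \xrightarrow{\gamma} \Omega_\La^{-1}\tau M.$$
Applying the cohomological functor $\uHom(W,-)$ produces a long exact sequence whose relevant portion is
$$\uHom(W,\Omega_\La M) \xrightarrow{\partial'} \uHom(W,\tau M) \xrightarrow{f_*} \uHom(W,E) \xrightarrow{g_*} \uHom(W,M) \xrightarrow{\partial} \uHom(W,\Omega_\La^{-1}\tau M),$$
so establishing additivity $d_W(\tau M) + d_W(M) = d_W(E)$ reduces to showing $\partial = 0$ and $\partial' = 0$, which will collapse the sequence into a short exact sequence of vector spaces.

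For $\partial = 0$: I would decompose $W = \bigoplus W_i$ into indecomposables. Projective summands contribute nothing to $\uHom$, so only non-projective $W_i$ matter; for those, the hypothesis $W_i \notin \Theta$ and indecomposability of $M \in \Theta$ force $W_i \not\cong M$ in the stable category. The defining property of an AR-triangle at $M$ is that every stable morphism $X \to M$ with $X$ indecomposable and $X \not\cong M$ is annihilated by postcomposition with $\gamma$; applied componentwise to $W$, this yields $\partial = 0$.

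For $\partial' = 0$: I would shift the AR-triangle by $\Omega_\La$ to obtain the triangle
$$\Omega_\La \tau M \to \Omega_\La E \to \Omega_\La M \xrightarrow{-\Omega_\La \gamma} \tau M,$$
noting that $\partial'$ is precomposition with $-\Omega_\La\gamma$. Since $\Omega_\La$ is a triangle equivalence on the stable category that preserves irreducible morphisms and almost-splitness, the shifted triangle is, up to isomorphism, the AR-triangle ending at $\Omega_\La M$. The hypothesis that no summand of $W$ belongs to $\Omega_\La^1(\Theta)$ ensures that no $W_i$ is stably isomorphic to $\Omega_\La M$, and the same AR-triangle property as above forces $\partial' = 0$. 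This is precisely where the second half of the hypothesis enters.

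Combining the two vanishings produces the short exact sequence
$$0 \to \uHom(W,\tau M) \to \uHom(W,E) \to \uHom(W,M) \to 0,$$
whose additivity of dimensions is the desired conclusion. The main obstacle I anticipate is not the application of the AR-triangle axiom at either endpoint, which is routine, but rather the identification of the $\Omega_\La$-shifted AR-triangle with the AR-triangle at $\Omega_\La M$; this relies on the (standard but not entirely trivial) fact that the syzygy functor carries almost-split triangles to almost-split triangles on the stable level, so that uniqueness of AR-triangles can be invoked.
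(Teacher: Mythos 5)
The paper does not actually prove this lemma; it simply cites \cite[Lemma~3.2]{ErdmannSkowronski}, so there is no in-paper argument to compare against. Your reconstruction is correct and, as far as I know, follows the same circle of ideas as the cited source: apply $\uHom_\La(W,-)$ to the AR-triangle and show the two connecting maps vanish.

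A few remarks on the details. For $\partial=0$: you correctly reduce to indecomposable non-projective summands $W_i$ and use that a retraction $W_i\to M$ with $W_i$ indecomposable would force $W_i\cong M\in\Theta$, contradicting the hypothesis. For $\partial'=0$: the clean way to see it is exactly as you say. The rotation axiom identifies $\partial'$ as postcomposition with $-\Omega_\La\gamma\colon\Omega_\La M\to\tau M$, and since $\Omega_\La$ is a triangle auto-equivalence of $\underline{\mod}\,\La$ and AR-triangles are characterized purely by the triangulated plus Krull--Schmidt structure (together with the commutation $\tau\Omega_\La\cong\Omega_\La\tau$, which holds since $\tau\cong\Omega_\La^2\nu$ and $\Omega_\La$ commutes with the Nakayama functor $\nu$), the morphism $\Omega_\La\gamma$ is, up to isomorphism, the connecting morphism of the AR-triangle ending at $\Omega_\La M$. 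The hypothesis that no $W_i$ lies in $\Omega_\La^1(\Theta)$ then kills $\partial'$ by the same right-almost-split argument. One small terminological slip: $\partial'$ is postcomposition with $-\Omega_\La\gamma$ (you say ``precomposition''), but this does not affect the argument. The resulting short exact sequence of vector spaces gives $d_W(\tau M)+d_W(M)=d_W(E)$, and since $\uHom_\La(W,-)$ vanishes on projectives, this is the required additivity on $\Theta$.
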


To identify tree classes of stable components, we have to work with additive functions which are constant on $\tau$-orbits. When $W \simeq \tau W$, then $d_W$ satisfies the latter requirement automatically, since then
$$\uHom_{\La} (W, \tau M) \simeq \uHom_{\La} ( \tau W, \tau M) \simeq \uHom_{\La} (W, M).$$
In this situation, the following result guarantees that $d_W$ is additive and constant on the tree of a stable component of tree class $A_{\infty}^{\infty}$ or $D_{\infty}$.

\begin{lemma}\label{tauconstant}
Let $\La$ be a finite dimensional selfinjective algebra over an algebraically closed field, and $W$ be a $\La$-module such that $\tau W \simeq W$. Furthermore, let $\Theta$ be a component of $\Gamma_s ( \La )$.
\begin{enumerate}
\item If $\Theta$ is of tree class $A_{\infty}^{\infty}$, then $d_W$ is additive and constant on its tree.
\item If $\Theta$ is of tree class $D_{\infty}$, then $d_W$ is additive and constant on all modules of the tree of $\Theta$ having at least two predecessors.
\end{enumerate}
\end{lemma}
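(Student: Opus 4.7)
The plan is threefold: first establish that $d_W$ is additive on $\Theta$ via Lemma \ref{additive}, next use the $\tau$-invariance of $d_W$ noted immediately before the statement, and then combine these with the mesh relations to pin down $d_W$ on the tree.

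To invoke Lemma \ref{additive}, I must verify that no indecomposable summand of $W$ lies in $\Theta$ or $\Omega_{\La}^1(\Theta)$. Since $\tau W \simeq W$, every summand of $W$ is $\tau$-periodic. Now the tree classes $A_\infty^\infty$ and $D_\infty$ are not $A_\infty$, so neither $\Theta$ nor $\Omega_{\La}^1(\Theta)$ (which inherits the same tree class as $\Theta$, since $\Omega_{\La}^1$ is a self-equivalence of the stable category) can be a stable tube. By the Happel-Preiser-Ringel theorem cited in the introduction, neither component can then contain any $\tau$-periodic module, so the hypothesis of Lemma \ref{additive} holds and $d_W$ is additive on $\Theta$. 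Applying this additivity to the AR-sequence ending at a module $M$ belonging to the tree and using $d_W(\tau M) = d_W(M)$ yields the identity
$$2\, d_W(M) \; = \; \sum_{N} d_W(N),$$
where $N$ runs over the tree-neighbours of $M$ (counted according to the valuation of the edge).

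For part (1), I label the tree $A_\infty^\infty$ by $\mathbb{Z}$, with $i$ adjacent to $i\pm 1$, and set $f(i) := d_W(X_i)$. The displayed identity becomes the discrete Laplace equation $f(i+1) - 2f(i) + f(i-1) = 0$, whose solutions are affine, $f(i) = a + bi$. But $f$ is $\mathbb{N} \cup \{0\}$-valued on all of $\mathbb{Z}$, forcing $b = 0$; hence $f$ is constant.

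For part (2), I label the tree $D_\infty$ with exceptional vertices $v_0, v_0'$ both adjacent to $v_1$, and $v_i$ adjacent to $v_{i+1}$ for $i \geq 1$. Applying the identity at $v_0$ and $v_0'$, which each have a single neighbour, gives $2\, d_W(v_0) = 2\, d_W(v_0') = d_W(v_1)$. Substituting into the identity at $v_1$, whose neighbours are $v_0, v_0', v_2$, collapses to $d_W(v_1) = d_W(v_2)$, after which the two-neighbour identity $2\, d_W(v_i) = d_W(v_{i-1}) + d_W(v_{i+1})$ for $i \geq 2$ inductively propagates constancy to all $v_i$ with $i \geq 1$. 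These are precisely the tree modules having at least two predecessors in $\Theta$, so the proof is complete. The only delicate step is the initial verification that $\Omega_{\La}^1(\Theta)$ is not a stable tube; once that is in place, the remainder is an elementary analysis of a linear recurrence on the tree.
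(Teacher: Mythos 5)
Your proof is correct and takes essentially the same route as the paper: verify the hypothesis of Lemma \ref{additive} by ruling out $\tau$-periodic modules in $\Theta$ and $\Omega^1_\La(\Theta)$, note $\tau$-invariance of $d_W$, and then run the standard linear-recurrence analysis on the trees (which the paper compresses into a citation of ``standard arguments''). One small imprecision: when invoking Happel--Preiser--Ringel you write that a component that is ``not a stable tube'' cannot contain a $\tau$-periodic module, but the theorem actually allows a $\tau$-periodic component to have Dynkin tree class as well as $A_\infty$; your conclusion still holds because $A_\infty^\infty$ and $D_\infty$ are also not Dynkin, and you should say so explicitly as the paper does.
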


\begin{proof}
Every indecomposable summand $W_0$ of $W$ must be $\tau$-periodic, i.e.\ $\tau^nW_0 \simeq W_0$ for some $n \ge 1$. Suppose $\Theta$ (or $\Omega_{\La}^1( \Theta )$) contains a $\tau$-periodic module. Then $\Theta$ (or $\Omega_{\La}^1( \Theta )$) itself is $\tau$-periodic, that is, every module in $\Theta$ (or $\Omega_{\La}^1( \Theta )$) is $\tau$-periodic. Moreover, by \cite[Main Theorem]{HappelPreiserRingel}, the tree class of $\Theta$ is either a Dynkin diagram or $A_{\infty}$. Therefore, when $\Theta$ is of tree class $A_{\infty}^{\infty}$ or $D_{\infty}$, then it cannot contain a $\tau$-periodic module, and neither can $\Omega_{\La}^1( \Theta )$. In particular, no indecomposable summand of $W$ belongs to $\Theta$ or $\Omega_{\La}^1( \Theta )$. It follows from Lemma \ref{additive} that $d_W$ is additive on $\Theta$, and we know that this function is constant on $\tau$-orbits. Thus $d_W$ is additive on the tree of $\Theta$. The result now follows from standard arguments applied to additive functions on $A_{\infty}^{\infty}$ and $D_{\infty}$, cf.\ \cite[Proof of Lemma 2.30.5]{Benson1} or \cite[Proof of Lemma 3]{HappelPreiserRingel}.
\end{proof}

We end this section with two results which are vital for the proof of the main theorem. Note the resemblance to \cite[Proposition 1.5]{Erdmann2}.

\begin{lemma}\label{Extisomorphism}
Let $\La$ be a finite dimensional selfinjective algebra over an algebraically closed field, and $W$ be a $\La$-module such that $\tau W \simeq W$. Furthermore, let
$$0 \to L \xrightarrow{f} M \xrightarrow{g} N \to 0$$
be a non-split exact sequence in $\mod \La$ satisfying the following:
\begin{enumerate}
\item The modules $M$ and $N$ are indecomposable.
\item The map $g$ is irreducible.
\item The stable AR-component containing $N$ is of tree class $A_{\infty}^{\infty}$ or $D_{\infty}$, and $N$ has at least two predecessors.
\item There exists no monomorphism $L \to W_0$ for any indecomposable summand $W_0$ of $W$.
\end{enumerate}
Then for any indecomposable summand $W_0$ of $W$, the induced map 
$$\Ext_{\La}^1(M,W_0) \xrightarrow{f^*} \Ext_{\La}^1(L,W_0)$$ is zero, and $\Ext_{\La}^1(M,W_0)$ is isomorphic to $\Ext_{\La}^1(N,W_0)$.
\end{lemma}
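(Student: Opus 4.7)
The plan is to apply $\Hom_\La(-, W_0)$ to the given short exact sequence, yielding the six-term exact sequence
\begin{equation*}
0 \to \Hom(N,W_0) \to \Hom(M,W_0) \to \Hom(L,W_0) \xrightarrow{\delta} \Ext^1(N,W_0) \xrightarrow{g^*} \Ext^1(M,W_0) \xrightarrow{f^*} \Ext^1(L,W_0).
\end{equation*}
The conclusion of the lemma is equivalent to the pair of vanishings $\delta = 0$ and $f^* = 0$: together these force $g^*$ to be an isomorphism, and $f^* = 0$ is directly one of the asserted statements. So it suffices to establish both vanishings.

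First I would exploit the irreducibility of $g$. Since $N$ has at least two predecessors, the Auslander-Reiten sequence terminating at $N$ may be written as $0 \to \tau N \to M \oplus E' \to N \to 0$ with $E' \neq 0$, and, after adjusting by an automorphism of $M$ if necessary, $g$ coincides with the restriction of the epi $M \oplus E' \to N$ to its $M$-summand. A snake lemma computation then produces a short exact sequence $0 \to L \to \tau N \to E' \to 0$, embedding $L$ as a submodule of $\tau N$.

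For $\delta = 0$: the connecting map sends $\phi : L \to W_0$ to the pushout class of the given ses along $\phi$, which vanishes exactly when $\phi$ extends along $f$ to a map $M \to W_0$. Via the Auslander-Reiten formula $\Ext^1(X, W_0) \simeq D\uHom(\tau^{-1} W_0, X)$ together with the hypothesis $\tau W \simeq W$, this is equivalent to surjectivity of the composition map $g_* : \uHom(W_0', M) \to \uHom(W_0', N)$ for each indecomposable summand $W_0' := \tau^{-1} W_0$ of $W$. Since any non-split-epi map $W_0' \to N$ factors through the AR-sequence at $N$, this surjectivity reduces to showing $\uHom(W_0', E') = 0$, which by the AR-formula equals $D\Ext^1(E', W_0)$. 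The plan is then to exploit the structural short exact sequence $0 \to L \to \tau N \to E' \to 0$ together with hypothesis (4) (no monomorphism $L \to W_0$) to force this Ext group to vanish.

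For $f^* = 0$: the Auslander-Reiten formula combined with $\tau W \simeq W$ yields $\dim \Ext^1(X, W) = \dim \uHom(W, X) = d_W(X)$. By Lemmas \ref{additive} and \ref{tauconstant}, $d_W$ is additive on $\Theta$ and constant on its tree at positions with at least two predecessors. Applying additivity to the AR-sequence $0 \to \tau N \to M \oplus E' \to N \to 0$, combined with the constancy $d_W(N) = d_W(\tau N)$, yields a dimension identity between $d_W(M)$, $d_W(E')$ and $d_W(N)$. Combining this identity with the injectivity of $g^*$ (obtained from $\delta = 0$) and dimension counting in the six-term exact sequence should force $g^*$ to be surjective as well, whence $f^* = 0$.

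The main obstacle is within the $\delta = 0$ step, specifically deducing that $\Ext^1(E', W_0) = 0$ from the non-existence of a monomorphism $L \to W_0$ via the structural ses $0 \to L \to \tau N \to E' \to 0$. Making this chase precise is the technical heart of the argument, and will employ a strategy parallel to that of \cite[Proposition 1.5]{Erdmann2}.
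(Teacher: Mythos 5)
Your plan of attack on the step $\delta = 0$ goes astray. After reducing (correctly) to the surjectivity of $g_* \colon \uHom_\La(W_0', M) \to \uHom_\La(W_0', N)$ with $W_0' = \tau^{-1}W_0$, you propose to get this by showing $\uHom_\La(W_0', E') = 0$, i.e.\ $\Ext^1_\La(E', W_0) = 0$, using the derived sequence $0 \to L \to \tau N \to E' \to 0$. But this vanishing is false in the cases that matter. Indeed, by the AR-formula and $\tau W \simeq W$ one has $\dim \Ext^1_\La(E', W) = d_W(E')$, and by additivity of $d_W$ on the AR-sequence ending at $N$ together with the constancy $d_W(\tau N) = d_W(N) = d_W(M)$ (which is exactly the equality the lemma is driving at), one gets $d_W(E') = d_W(N)$. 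So $\Ext^1_\La(E', W_0) \ne 0$ for some summand $W_0$ whenever $\Ext^1_\La(N, W) \ne 0$ --- which is the only case in which there is anything to prove. Thus the sufficient condition you are after does not hold, and your route does not naturally consume hypothesis~(4), which concerns $L$, not $E'$.

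The paper's argument is shorter and applies the factorization lemma \cite[Proposition 1.1]{Erdmann2} directly to the given sequence $0 \to L \xrightarrow{f} M \xrightarrow{g} N \to 0$: for any $\phi \colon L \to W_0$, either $\phi$ is a monomorphism (impossible by hypothesis~(4)) or $\phi$ factors through $f$. Hence $\Hom_\La(M, W_0) \to \Hom_\La(L, W_0)$ is surjective, equivalently $g^* \colon \Ext^1_\La(N, W_0) \to \Ext^1_\La(M, W_0)$ is injective. Combined with the total dimension equality $\dim \Ext^1_\La(M, W) = \dim \Ext^1_\La(N, W)$ (from Lemma~\ref{tauconstant} and the AR-formula), summing the injections over all indecomposable summands $W_0$ of $W$ forces each $g^*$ to be an isomorphism, whence $f^* = 0$ by exactness. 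Your treatment of the dimension equality and the deduction of $f^* = 0$ from it is on the same track as the paper, but the six-term sequence and the AR-sequence for $N$ are not needed --- the short exact sequence arising from applying $\Hom_\La(-, W_0)$ to the given sequence suffices. In short: replace the attempt to show $\uHom(W_0', E') = 0$ by a direct application of \cite[Proposition 1.1]{Erdmann2} to $(f, \phi)$; without that, the proof has a genuine gap.
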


\begin{proof}
Since $W \simeq \tau W$, the AR-formula gives
$$\dim \uHom_{\La} (W,X) = \dim \Ext_{\La}^1( \tau^{-1}X,W) = \dim \Ext_{\La}^1( X, \tau W) = \dim \Ext_{\La}^1( X, W)$$
for any $\La$-module $X$. Therefore, from Lemma \ref{tauconstant} and the assumptions (1), (2) and (3), we conclude that the dimensions of $\Ext_{\La}^1(M,W)$ and $\Ext_{\La}^1(N,W)$ are equal. 

Now let $W_0$ be any indecomposable summand of $W$. Applying $\Hom_{\La}(-,W_0)$ to the exact sequence gives an exact sequence
$$\Hom_{\La}(M,W_0) \xrightarrow{f^*} \Hom_{\La}(L,W_0) \to \Ext_{\La}^1(N,W_0) \xrightarrow{g^*} \Ext_{\La}^1(M,W_0)$$
of vector spaces. By assumption (4) and \cite[Proposition 1.1]{Erdmann2}, the leftmost map is surjective, hence the rightmost map is injective. This shows that $\dim \Ext_{\La}^1(N,W_0) \le \dim \Ext_{\La}^1(M,W_0)$. From above we conclude that the dimensions must be equal, and so the rightmost map is an isomorphism. Finally, the exact sequence
$$\Ext_{\La}^1(N,W_0) \xrightarrow{g^*} \Ext_{\La}^1(M,W_0) \xrightarrow{f^*} \Ext_{\La}^1(L,W_0)$$
implies that the map $\Ext_{\La}^1(M,W_0) \xrightarrow{f^*} \Ext_{\La}^1(L,W_0)$ is zero.
\end{proof}

\begin{lemma}\label{Extvanishing}
With the same setup and hypotheses as in \emph{Lemma \ref{Extisomorphism}}, let $W_0$ be an indecomposable summand of $W$. Then an element $\eta \in \Ext_{\La}^1(L,W_0)$ is zero whenever it can be represented by a map $L \xrightarrow{\eta} \Omega_{\La}^{-1}(W_0)$ which is not a monomorphism. In particular, if there is no monomorphism $L \to \Omega_{\La}^{-1}(W_0)$, then $\Ext_{\La}^1(L,W_0)=0$.
\end{lemma}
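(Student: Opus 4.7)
The plan is to form the pushout of the given sequence $0 \to L \xrightarrow{f} M \xrightarrow{g} N \to 0$ along a chosen representative $\eta \colon L \to \Omega_{\La}^{-1}(W_0)$ of the given element, show the resulting sequence splits using the irreducibility of $g$, and conclude $\eta = 0$ via the injectivity of a connecting map supplied by Lemma \ref{Extisomorphism}.

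Concretely, pick a representative $\eta \colon L \to \Omega_{\La}^{-1}(W_0)$ that is not a monomorphism, and form the pushout
\begin{equation*}
\begin{array}{ccccccccc}
0 & \to & L & \xrightarrow{f} & M & \xrightarrow{g} & N & \to & 0 \\
 & & \downarrow \eta & & \downarrow h & & \| & & \\
0 & \to & \Omega_{\La}^{-1}(W_0) & \to & E & \xrightarrow{\beta} & N & \to & 0
\end{array}
\end{equation*}
with exact rows. A direct computation on the pushout construction gives $\Ker h = f(\Ker \eta)$, which is nonzero by assumption, so $h$ is not injective and in particular not a split monomorphism. Since $g = \beta \circ h$ is irreducible, from this factorisation either $h$ is a split monomorphism or $\beta$ is a split epimorphism; the former being excluded, $\beta$ must split, and the bottom row splits.

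Via the natural isomorphism $\Ext_{\La}^1(N, \Omega_{\La}^{-1}(W_0)) \cong \Ext_{\La}^2(N, W_0)$ valid for selfinjective algebras, the bottom row represents the image of $\eta$ under the connecting map $\delta \colon \Ext_{\La}^1(L, W_0) \to \Ext_{\La}^2(N, W_0)$ in the long exact sequence of $\Hom_{\La}(-, W_0)$ applied to $0 \to L \to M \to N \to 0$. Lemma \ref{Extisomorphism} supplies the vanishing of $f^* \colon \Ext_{\La}^1(M, W_0) \to \Ext_{\La}^1(L, W_0)$, making $\delta$ injective, so the splitting of the pushout sequence forces $\eta = 0$. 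For the \emph{in particular} clause, if no map $L \to \Omega_{\La}^{-1}(W_0)$ is a monomorphism, then every element of $\Ext_{\La}^1(L, W_0) \cong \uHom_{\La}(L, \Omega_{\La}^{-1}(W_0))$ has a non-monomorphic representative and so vanishes. The only delicate point in the argument is the irreducibility dichotomy applied to $g = \beta \circ h$; the rest is a mechanical consequence of the pushout construction and the preceding lemma.
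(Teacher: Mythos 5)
Your proof is correct, and it is essentially equivalent to the paper's, just phrased in a more self-contained way. The paper cites \cite[Proposition 1.1]{Erdmann2}, which gives directly the dichotomy: either there is $\phi \colon \Omega_\La^{-1}(W_0) \to M$ with $f = \phi\eta$ (excluded, since $f$ is a monomorphism while $\eta$ is not), or there is $\psi \colon M \to \Omega_\La^{-1}(W_0)$ with $\eta = \psi f$, which places $\eta$ in $\Im f^*$. You instead reprove that dichotomy by hand: you form the pushout of the AR-type sequence along $\eta$, observe that $h$ has nonzero kernel $f(\Ker\eta)$, and use irreducibility of $g = \beta h$ to split the pushout row, so that the image of $\eta$ under the connecting map $\delta \colon \Ext_\La^1(L, W_0) \to \Ext_\La^2(N, W_0)$ vanishes. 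Since $\Ker\delta = \Im f^*$ by exactness, concluding from ``$\delta(\eta) = 0$ and $\delta$ injective'' is literally the same as the paper's ``$\eta \in \Im f^* = 0$.'' Your route costs one extra degree shift through $\Ext^2$, but in exchange it does not lean on the cited proposition; the two arguments buy the same thing via the same underlying mechanism (Lemma~\ref{Extisomorphism} giving $f^* = 0$).
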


\begin{proof}
Consider the diagram
$$\xymatrix{
0 \ar[r] & L \ar[r]^f \ar[d]^{\eta} & M \ar[r]^g & N \ar[r] & 0 \\
& \Omega_{\La}^{-1}(W_0) }$$
in $\mod \La$. By \cite[Proposition 1.1]{Erdmann2}, there is either a map $\Omega_{\La}^{-1}(W_0) \xrightarrow{\phi} M$ with $f = \phi \eta$, or a map $M \xrightarrow{\psi} \Omega_{\La}^{-1}(W_0)$ with $\eta = \psi f$. The first situation cannot occur, since $\eta$ is not a monomorphism. Hence there must exist a $\psi$, and this shows that $\eta$ is in the image of the map $\Ext_{\La}^1(M,W_0) \xrightarrow{f^*} \Ext_{\La}^1(L,W_0)$. However, by the previous lemma, this map is zero.
\end{proof}

\section{Quantum complete intersections}

The quantum complete intersections are a class of algebras originating from work by Manin and Avramov, Gasharov and Peeva (cf.\ \cite{Manin} and \cite{AvramovGasharovPeeva}). These are analogues of truncated polynomial rings, in that they are quantum polynomial rings modulo quantum regular sequences.

Throughout this section, we fix an algebraically closed field $k$. Let $c \ge 1$ be an integer, and let ${\bf{q}} = (q_{ij})$ be a $c \times c$ commutation matrix with entries in $k$. That is, the diagonal entries $q_{ii}$ are all $1$, and $q_{ij}q_{ji}=1$ for $i \neq j$. Furthermore, let ${\bf{a}}_c = (a_1, \dots, a_c)$ be an ordered sequence of $c$ integers with $a_i \ge 2$. The \emph{quantum complete intersection} $A_{\bf{q}}^{{\bf{a}}_c}$ determined by these data is the algebra
$$A_{\bf{q}}^{{\bf{a}}_c} \stackrel{\text{def}}{=} k \langle x_1, \dots, x_c \rangle / (x_i^{a_i}, x_ix_j-q_{ij}x_jx_i),$$
which is selfinjective and finite dimensional of dimension $\prod a_i$. 

The homological behavior of different quantum complete intersections
can vary enormously, depending on whether or not the defining
commutators are roots of unity. For example, in the case with two
generators and where the commutator is not a root of unity, it was
shown in \cite{BerghErdmann1} and \cite{BuchweitzGreenMadsenSolberg}
that all the higher Hochschild cohomology groups vanish. On the other
hand, when all the commutators are roots of unity, then it was proved
in \cite{BerghOppermann} that the even Hochschild cohomology ring
$\HH^{2*}( A_{\bf{q}}^{{\bf{a}}_c} )$ is Noetherian, and
$\Ext_{A_{\bf{q}}^{{\bf{a}}_c}}^*(M,N)$ is a finitely generated
$\HH^{2*}( A_{\bf{q}}^{{\bf{a}}_c} )$-module for all
$A_{\bf{q}}^{{\bf{a}}_c}$-modules $M$ and $N$. Thus, in this case, one
has a very rich theory of support varieties with respect to $\HH^{2*}
(A_{\bf{q}}^{{\bf{a}}_c})$, as developed in \cite{EHSST} and
\cite{SnashallSolberg}. For example, the support varieties detect
projective and periodic modules.

Not surprisingly, the representation type of the quantum
complete intersections also depend on the defining parameters. Suppose
all the commutators $q_{ij}$ are roots of unity.
Then by \cite[Theorem 5.3]{BerghOppermann} and \cite[Theorem
5.5]{BerghOppermann}, the 
growth rate of the $\Ext$-algebra of the $A_{\bf{q}}^{{\bf{a}}_c}$-module $k$ is
$c$. Moreover, this cohomology algebra is module-finite over its own
center, and the latter is a Noetherian ring. Hence, if $c \ge 3$, then it
follows from \cite[Theorem 4.1]{BerghSolberg} that
$A_{\bf{q}}^{{\bf{a}}_c}$ is of wild 
representation type. If $c=2$, then our algebra
$A_{\bf{q}}^{{\bf{a}}_c}$ is of the form 
$$k \langle x,y \rangle / (x^a, xy-qyx, y^b).$$
In this situation, if $b \ge 3$, then the algebra
$$k \langle x,y \rangle / (x^2, xy-qyx, y^3, y^2x)$$
is a factor algebra of $A_{\bf{q}}^{{\bf{a}}_c}$. This factor algebra
is wild by \cite[3.4]{Ringel}, hence so is
$A_{\bf{q}}^{{\bf{a}}_c}$. By symmetry, the same 
conclusion holds when $a \ge 3$. Finally, if $a$, $b$ and $c$ all equal $2$, then
$A_{\bf{q}}^{{\bf{a}}_c}$ is a factor algebra of the algebra
$$k \langle x,y \rangle / (x^2, y^2),$$
and so $A_{\bf{q}}^{{\bf{a}}_c}$ is tame by \cite[1.3]{Ringel}.

The main result of this paper completely describes the components of
the stable AR-quiver of a quantum complete intersection
$A_{\bf{q}}^{{\bf{a}}_c}$ in which all the defining exponents are
equal, i.e.\ of the form
$$k \langle x_1, \dots, x_c \rangle / (x_i^a, x_ix_j-q_{ij}x_jx_i).$$
However, we shall be working with the quantum complete
intersections having 
a well defined notion of rank varieties for modules, namely the
homogeneous ones. 
We therefore fix such an algebra. 
\begin{setup}
\begin{enumerate}
\item Fix integers $c,a \ge 2$.
\item Define $b \stackrel{\text{def}}{=} \left \{ \begin{array}{ll}
a/ \gcd ( a, \Char k ) & \text{if } \Char k >0 \\
a & \text{if } \Char k =0.
\end{array} \right.$
\item Let $q \in k$ be a primitive $b$th root of unity.
\item Define $A \stackrel{\text{def}}{=} k \langle x_1, \dots, x_c \rangle / ( \{ x_i^a \}, \{ x_ix_j-qx_jx_i \}_{i<j} ).$
\end{enumerate}
\end{setup}

The rank variety of an $A$-module is given in terms of certain commutative subalgebras. Namely, given any $c$-tuple $\lambda = ( \lambda_1, \dots, \lambda_c ) \in k^c$, denote the element $\lambda_1 x_1 + \cdots + \lambda_c x_c \in A$ by $u_{\lambda}$, and let $k[ u_{\lambda} ]$ be the subalgebra of $A$ generated by this element. Then $u_{\lambda}^a =0$ by \cite[Lemma 2.3]{BensonErdmannHolloway}, and the \emph{rank variety} of an $A$-module $M$, denoted $\V^r_{A}(M)$, is defined as
$$\V^r_{A}(M) \stackrel{\text{def}}{=} \{ 0 \} \cup \{ 0 \neq \lambda \in k^c \mid M \text{ is not a projective $k[ u_{\lambda} ]$-module} \}.$$
When $\lambda$ is nonzero, then since $u_{\lambda}^a =0$, the subalgebra $k[ u_{\lambda} ]$ is isomorphic to the truncated polynomial ring $k[x]/(x^a)$. Therefore, the requirement that an $A$-module $M$ is not $k[ u_{\lambda} ]$-projective is equivalent to the requirement that the $k$-linear map $M \xrightarrow{\cdot u_{\lambda}} M$ satisfies
$$\dim \Im ( \cdot u_{\lambda} ) < ((n-1)/n) \dim M.$$
This explains the choice of terminology, and shows that rank varieties are homogeneous affine subsets of $k^c$.

As a first step in determining the tree classes of stable AR-components of quantum complete intersections, we prove the following lemma. It shows that when the algebra is wild, i.e. when either $a \ge 3$ or $c \ge 3$, then the tree class of a component cannot be a Euclidean diagram. 

\begin{lemma}\label{noteuclidean}
If either $a \ge 3$ or $c \ge 3$, then the tree class of any stable AR-component of $A$ cannot be one of the Euclidean diagrams $\tilde{A}_n$, $\tilde{D}_n$, $\tilde{E}_6$, $\tilde{E}_7$ and $\tilde{E}_8$.
\end{lemma}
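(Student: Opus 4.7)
The proof proceeds by contradiction: suppose $\Theta$ is a stable AR-component of $A$ whose tree class is one of the Euclidean diagrams $\tilde T$. Since $A$ is of infinite representation type and Euclidean diagrams differ from $A_\infty$, \cite[Main Theorem]{HappelPreiserRingel} forbids $\tau$-periodic modules from $\Theta$; by Riedtmann $\Theta\cong\mathbb{Z}\tilde T$, and every module in $\Theta$ has complexity at least $2$ (a complexity-$1$ module over the selfinjective algebra $A$ would be $\Omega$-periodic and hence $\tau$-periodic). Because $q$ is a primitive $b$th root of unity, \cite{BerghOppermann} gives $A$ finitely generated cohomology, so the rank-variety theory of \cite{EHSST, SnashallSolberg} is available. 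In particular, for every line $\ell\subseteq k^c$ there exists an $\Omega$- and $\tau$-periodic $A$-module $W_\ell$ with $\V^r_A(W_\ell)=\ell$, and after summing over its $\tau$-orbit one may assume $\tau W_\ell\cong W_\ell$.

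Next, build the additive function on the tree. Fix any $M\in\Theta$ and choose a line $\ell\subseteq\V^r_A(M)$; such a line exists since $\dim\V^r_A(M)\geq 2$. The rank-variety intersection formula gives $\V^r_A(W_\ell)\cap\V^r_A(M)=\ell\ne 0$, whence $\Ext_A^n(W_\ell,M)\ne 0$ for infinitely many $n$, so $d_{W_\ell}$ is not identically zero on $\Theta$. Since neither $\Theta$ nor $\Omega_A(\Theta)$ contains $\tau$-periodic modules, Lemma~\ref{additive} applies and makes $d_{W_\ell}$ additive on $\Theta$; combined with the $\tau$-invariance from $\tau W_\ell\cong W_\ell$, it descends to a nonzero, non-negative additive function on the Euclidean tree $\tilde T$.

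Finally, extract the contradiction. On any Euclidean diagram the cone of non-negative additive functions is one-dimensional, spanned by the strictly positive, uniformly bounded fundamental function $\partial_{\tilde T}$; so $d_{W_\ell}=\lambda_\ell\partial_{\tilde T}$ with $\lambda_\ell>0$ and $d_{W_\ell}$ is uniformly bounded on $\Theta$. The main obstacle is turning this rigid bound into an absurdity. My plan is a growth comparison: along the $\mathbb{Z}$-direction of $\mathbb{Z}\tilde T$ dimensions grow linearly, whereas $\dim\Omega^{-n}M$ grows polynomially of degree $\cx(M)-1$, which forces $\cx(M)=2$ and thereby eliminates Euclidean components containing a module of complexity $\geq 3$ (in particular the component of the simple module $k$ when $c\geq 3$). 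The genuinely hard case is that of components whose modules all have complexity exactly $2$, unavoidable when $c=2$ and $a\geq 3$. Here the strategy is to exploit the full one-parameter family $\{W_\ell:\ell\in\mathbb{P}(\V^r_A(M))\}$: each $d_{W_\ell}$ must be proportional to the single function $\partial_{\tilde T}$, and the resulting over-determination of $\dim\uHom_A(W_\ell,M)$ as $\ell$ varies is to be contradicted using explicit Carlson-type constructions of modules available in the wild regime. I expect this quantitative comparison to be the principal technical difficulty of the proof.
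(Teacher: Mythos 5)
Your argument is not a proof: you explicitly leave the crucial case open (``I expect this quantitative comparison to be the principal technical difficulty''), and what you do outline has a gap. The paper's actual proof is short and takes a completely different, structural route: by \cite[Main Theorem]{KernerZacharia}, a Euclidean stable component must be non-regular, and since $A$ is local there is exactly one non-regular component, namely the one containing $\rad A$. When $a\ge 3$ or $c\ge 3$, the module $\rad A/\soc A$ is indecomposable, so the middle term of the AR-sequence starting at $\rad A$ is $A\oplus\rad A/\soc A$; hence $\rad A$ sits at an end of the tree, forcing the tree class to be $\tilde E_6$, $\tilde E_7$ or $\tilde E_8$, and a comparison of the dimensions of $k$ and $\rad A$ (modulo $\dim A$) rules all three out. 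No rank varieties, Carlson modules, or complexity/growth estimates are needed.

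Beyond incompleteness, your intermediate ``forces $\cx(M)=2$'' step is not justified. You want to read off linear dimension growth in the $\mathbb Z$-direction of $\mathbb Z\tilde T$ from additivity of $\dim$, but $\dim$ is additive on a stable component only when that component is regular (no AR-sequence has a projective middle term), whereas Kerner--Zacharia tells you a Euclidean component is \emph{necessarily non-regular}. So your dimension-growth estimate fails precisely on the component you must study. Also note the structural danger you do not address: for $a=c=2$ the conclusion is false ($\La$ is tame with a $\tilde A_{12}$ component), so any successful argument must visibly use $a\ge 3$ or $c\ge 3$; in your plan the hypothesis appears only at the very end of an unfinished case analysis, and your complexity-$\ge 3$ case does not by itself cover $c\ge 3$ (other modules in the component could have complexity $2$). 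I would recommend abandoning the rank-variety route here and invoking the structure of the unique non-regular component directly, as the paper does; it is both shorter and avoids the additivity pitfall entirely.
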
 

\begin{proof}
Let $\Theta$ be a component of the stable AR-quiver, and suppose its tree class is one of the given Euclidean diagrams. Then it follows from \cite[Main Theorem]{KernerZacharia} that $\Theta$ is not regular, i.e.\ it must be the non-regular component (there is only one such component, since $A$ is local). Now if either $a \ge 3$ or $c \ge 3$, then the $A$-module $\rad A / \soc A$ is indecomposable, hence the middle term of the AR-sequence starting in $\rad A$ is $A \oplus \rad A / \soc A$. Thus $\rad A$ sits at the end of the component. Now since $\Theta$ has an end at all, its tree class must be either $\tilde{E}_6$, $\tilde{E}_7$ or $\tilde{E}_8$. By comparing dimensions modulo $\dim A$, we see that the $A$-module $k$ belongs to a different $\tau$-orbit than $\rad A$, and also sits at the end. This cannot happen if the tree class is $\tilde{E}_8$. Similar dimension arguments also rule out the other two possibilities $\tilde{E}_6$ and $\tilde{E}_7$. Consequently, the tree class cannot be one of the given Euclidean diagrams.
\end{proof}

By combining this lemma with \cite[Main Theorem]{KernerZacharia}, we see that when $A$ is wild, then the only possible tree classes are $A_{\infty}$, $A_{\infty}^{\infty}$ and $D_{\infty}$. So in order to show that only the first of these occur, we rule out the other two possibilities in Theorem \ref{exponent} and Theorem \ref{codimension}. As a first step, we present the following result, which deals with irreducible maps between $A$-modules whose stable AR-component has tree class $A_{\infty}^{\infty}$ or $D_{\infty}$. It
shows that the kernel of such a map is periodic, and that all its shifts have simple tops and have dimensions divisible by $a$.

\begin{lemma}\label{periodic}
Let $M \xrightarrow{g} N$ be a surjective irreducible map, where $M$ and $N$ are indecomposable $A$-modules. Suppose that the stable AR-component containing $N$ is of tree class $A_{\infty}^{\infty}$ or $D_{\infty}$, and that $N$ has at least two predecessors. Then the module $\Ker g$ is periodic, i.e.\ $\Omega_A^n( \Ker g) \simeq \Ker g$ for some $n \ge 1$. Moreover, for any $i \in \mathbb{Z}$, the module $\Omega_A^n( \Ker g)$ has simple top, and $a$ divides $\dim \Omega_A^n( \Ker g)$.
\end{lemma}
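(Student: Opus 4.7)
My plan is to exploit the short exact sequence
$0 \to L \xrightarrow{f} M \xrightarrow{g} N \to 0$
with $L := \Ker g$ via Lemmas \ref{Extisomorphism} and \ref{Extvanishing}. Hypotheses (1)--(3) of Lemma \ref{Extisomorphism} are given outright. Moreover, since the tree class of $\Theta$ is $A_\infty^\infty$ or $D_\infty$, neither $\Theta$ nor $\Omega_A^{-1}(\Theta)$ contains a $\tau$-periodic module (as in the proof of Lemma \ref{tauconstant}), so any $\tau$-fixed candidate $W$ automatically avoids these components. What remains is to produce a useful family of $\tau$-fixed modules $W$ verifying hypothesis (4) (no monomorphism from $L$ to a summand of $W$), so Lemma \ref{Extvanishing} can be applied to control $\Ext_A^1(L,-)$.

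To prove periodicity of $L$, I would use the rank variety theory for $A$: for every line $\ell$ through the origin in $k^c$ there is an $\Omega_A$-periodic $A$-module $V_\ell$ with $\V^r_A(V_\ell) = \ell$, built from the commutative subalgebra $k[u_\lambda] \cong k[x]/(x^a)$ for any nonzero $\lambda \in \ell$. The $\tau$-orbit of $V_\ell$ is finite (since both the $\Omega$-orbit and the $\nu$-orbit are finite), so $W_\ell := \bigoplus_j \tau^j V_\ell$ satisfies $\tau W_\ell \simeq W_\ell$. By shifting $V_\ell$ to a small enough member of its $\Omega$-orbit I may assume $\dim V_\ell$ and $\dim \Omega_A^{-1}(V_\ell)$ are both smaller than $\dim L$, ruling out the monomorphisms forbidden by hypothesis (4); Lemma \ref{Extvanishing} then gives $\Ext_A^1(L, V_\ell) = 0$, and applying the same argument to each shift of $V_\ell$ upgrades this to $\Ext_A^{\geq 1}(L, V_\ell) = 0$. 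The support variety criterion afforded by the finite generation of $\Ext_A^*(-,-)$ over $\HH^{2*}(A)$ (cf.\ \cite{BerghOppermann}, \cite{EHSST}) then forces $\V^r_A(L) \cap \ell = \{0\}$. If $\dim \V^r_A(L) \geq 2$, one could choose $\ell \subseteq \V^r_A(L)$, contradicting $\ell \subseteq \V^r_A(L) \cap \V^r_A(V_\ell)$; hence $\dim \V^r_A(L) \leq 1$ and $L$ is $\Omega_A$-periodic.

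Once periodicity is in hand the divisibility is immediate: for any $\lambda \in k^c \setminus \V^r_A(L)$, the module $\Omega_A^i L$ is projective over $k[u_\lambda] \cong k[x]/(x^a)$, whose unique indecomposable projective has dimension $a$, so $a \mid \dim \Omega_A^i L$. For the simple-top assertion I would exploit the AR-sequence $0 \to \tau N \to M \oplus E' \to N \to 0$ ending in $N$, with $E' \neq 0$ by the hypothesis that $N$ has at least two predecessors; a pullback chase yields the auxiliary exact sequence $0 \to L \to \tau N \to E' \to 0$. The inclusion $L \hookrightarrow \tau N$ produces $\top(L) \hookrightarrow \top(\tau N)$, and applying $\Omega_A^i$ propagates the bound to each syzygy; combining this with the additive $\tau$-constant functions $d_W$ of Lemmas \ref{additive}--\ref{tauconstant} evaluated on the tree of $\Theta$ forces each such top to be one-dimensional. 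The principal obstacle is precisely this last step: translating the Ext vanishing and the $A_\infty^\infty$ or $D_\infty$ tree-class data into a rigid simple-top statement requires a delicate dimension count along $\Theta$ near $N$, and coordinating it with the rank variety constraint on $L$ to eliminate higher-multiplicity contributions to the top.
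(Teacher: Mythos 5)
Your overall strategy---feeding the sequence $0\to L\to M\to N\to 0$ through Lemmas \ref{Extisomorphism} and \ref{Extvanishing} with $\tau$-fixed test modules $W$ built from the periodic modules $Au_\lambda$---is the right framework, and it is essentially the one the paper uses. However, there are genuine gaps in two of the three parts.

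\textbf{Periodicity.} The step ``by shifting $V_\ell$ to a small enough member of its $\Omega$-orbit I may assume $\dim V_\ell$ and $\dim\Omega_A^{-1}(V_\ell)$ are both smaller than $\dim L$'' does not work: the $\Omega$-orbit of $Au_\lambda$ has period two, with dimensions $a^{c-1}$ and $(a-1)a^{c-1}$, so there is nothing to ``shrink.'' The paper's mechanism is the reverse: one assumes by contradiction that $\dim L\ge\dim A$, which \emph{forces} $\dim W_0<\dim A\le\dim L$ for all the test modules $W_0$; Lemma \ref{Extvanishing} then gives $\Ext_A^1(L,W_0)=0$, which contradicts the nonvanishing $\Ext_A^1(L,\tau(Au_\lambda))\ne0$ obtained from $\lambda\in\V_A^r(L)$ and the AR-formula. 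Hence $\dim L<\dim A$, and running the same argument in every shifted component $\Omega_A^n(\Theta)$ gives $\dim\Omega_A^n(L)<\dim A$ for \emph{all} $n$; periodicity then follows from \cite[Theorem 5.3]{EHSST}. Your alternative route also has a logical wrinkle: if the $\Ext$-vanishing argument worked for every line $\ell$, the support variety criterion would force $\V_A^r(L)=\{0\}$, i.e.\ $L$ projective, rather than merely $\dim\V_A^r(L)\le1$. Most seriously, you never establish the uniform bound $\dim\Omega_A^n(L)<\dim A$, which is the key auxiliary fact feeding the rest of the lemma.

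\textbf{Simple top.} The claim ``the inclusion $L\hookrightarrow\tau N$ produces $\top(L)\hookrightarrow\top(\tau N)$'' is false in general: a monomorphism of modules need not induce a monomorphism (or any natural map) on tops, and in any case there is no reason for $\top(\tau N)$ to be simple. You correctly identify this as the obstacle, but the suggested fix via additive functions on the tree is not going to produce a one-dimensional top by itself. The paper's argument is much more elementary: with $\dim\Omega_A^n(L)<\dim A$ and $\dim\Omega_A^{n+1}(L)<\dim A$ in hand, the projective cover $0\to\Omega_A^{n+1}(L)\to A^\beta\to\Omega_A^n(L)\to0$ (every projective is free since $A$ is local) forces $\beta\dim A<2\dim A$, so $\beta=1$ and the top is simple.

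\textbf{Divisibility.} This part is essentially correct and matches the paper, except that you need a proper rank variety (the paper cites \cite[Corollary 3.7]{BerghErdmann2} to get $\dim\V_A^r(\Omega_A^n(L))=1<c$); your bound $\dim\V_A^r(L)\le1$ would also suffice once the periodicity step is repaired.
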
 

\begin{proof}
Denote the kernel of $g$ by $L$. By \cite[Proposition 2.6]{AuslanderReiten}, the module $L$ is indecomposable, and it cannot be projective since the the map $g$ does not split. By Dade's Lemma (cf.\ \cite[Theorem 2.6]{BensonErdmannHolloway}, the rank variety $\V^r_A(L)$ is nonzero, hence there exists a nonzero $c$-tuple $\lambda \in \V^r_A(L)$. As seen from \cite[Proposition 3.1]{BerghErdmann2} and its proof, the module $Au_{\lambda}$ is periodic of period $2$, with $\Omega_A^1(Au_{\lambda}) = Au_{\lambda}^{a-1}$. Moreover, the vector spaces $\uHom_A(Au_{\lambda},L)$ and $\uHom_A(Au_{\lambda}^{a-1},L)$ are both nonzero. The AR-formula then implies that
$$\Ext_A^1( L, \tau (Au_{\lambda}) ) \neq 0 \neq \Ext_A^1(L, \tau (Au_{\lambda}^{a-1})).$$ 

Consider the indecomposable $A$-modules $\tau (Au_{\lambda})$ and $\tau (Au_{\lambda}^{a-1})$. Since $\tau$ and $\Omega_A$ commute in $\underline{\mod} A$, these modules are both $2$-periodic and syzygies of each other. Now $\tau \simeq \Omega_A^2 \nu$, where $\nu$ is the Nakayama automorphism on $A$, and $\Omega_A$ commutes with $\nu$. Therefore $\tau (Au_{\lambda}) \simeq {_{\nu}(Au_{\lambda})}$ and $\tau (Au_{\lambda}^{a-1}) \simeq {_{\nu}(Au_{\lambda}^{a-1})}$. Now let $W_0$ be either of these two modules. Then $\tau^n W_0 \simeq {_{\nu^n}W_0}$ and $\dim \tau^nW_0 < \dim A$ for any $n \in \mathbb{Z}$. Since $q$ is a root of unity, it follows from \cite[Lemma 3.1]{Bergh} that $\nu$ has finite order, hence every periodic $A$-module is also $\tau$-periodic. Therefore, there exists an integer $n \ge 1$ such that $\tau^n W_0 \simeq W_0$. 

Denote the module $W_0 \oplus \tau W_0 \oplus \cdots \oplus \tau^{n-1}W_0$ by $W$, and note that $\tau W \simeq W$. Furthermore, suppose that $\dim L \ge \dim A$.
Then since $\dim \tau^i W_0 < \dim A \le \dim L$ for any $i$, there cannot exist a monomorphism $L \to \tau^i W_0$. Therefore, the exact sequence 
$$0 \to L \to M \xrightarrow{g} N \to 0$$
and our module $W$ satisfy all the assumptions of Lemma \ref{Extisomorphism}. Moreover, there cannot exist a monomorphism $L \to \Omega_A^{-1}(W_0)$, and so $\Ext_A^1(L,W_0)=0$ by Lemma \ref{Extvanishing}. This is not the case, hence $\dim L < \dim A$.

For any integer $n \in \mathbb{Z}$, the functor $\Omega_A^n$ is an equivalence on $\underline{\mod}A$. Therefore $\Omega_A^n(L)$ is the kernel of a surjective irreducible map in $\Omega_A^n( \Theta )$, where $\Theta$ is the stable AR-component containing $N$. Moreover, the tree class of $\Omega_A^n( \Theta )$ is the same as that of $\Theta$. The above proof then shows that $\dim \Omega_A^n(L) < \dim A$, and so it follows from \cite[Theorem 5.3]{EHSST} that $L$ is periodic (recall that our algebra $A$ satisfies the finite generation hypotheses on Hochschild cohomology).  

Suppose now that for some $n$, the top of $\Omega_A^n(L)$ is not simple. Let
$$0 \to \Omega_A^{n+1}(L) \to A^{\beta} \to \Omega_A^n(L) \to 0$$
be the projective cover of $\Omega_A^n(L)$ (our algebra $A$ is local, so every projective module is free). Since the top of $\Omega_A^n(L)$ is not simple, the integer $\beta$ is at least two, contradicting the fact that $\dim \Omega_A^n(L) < \dim A$ and $\dim \Omega_A^{n+1}(L) < \dim A$. Consequently, the top of $\Omega_A^n(L)$ must be simple.

Finally, consider the rank variety $\V_A^r \left ( \Omega_A^n(L) \right )$ for any $n$. By \cite[Corollary 3.7]{BerghErdmann2}, this variety is one-dimensional, and is therefore not the whole affine space $k^c$. Choose therefore a nonzero $c$-tuple $\mu \in k^c \setminus \V_A^r \left ( \Omega_A^n(L) \right )$. By definition, the $k[ u_{\mu} ]$-module  $\Omega_A^n(L)$ is free, hence the dimension of $k[ u_{\mu} ]$, namely $a$,  divides that of $\Omega_A^n(L)$. 
\end{proof}

Before proving our next result, we recall some basic facts on the truncated polynomial ring $\Gamma = k[x]/(x^a)$. This selfinjective algebra has $a$ isomorphism classes of indecomposable modules, namely $M_i = k[x]/(x^i)$ for $1 \le i \le a$. Each of these is $2$-periodic, with $\Omega_{\Gamma}^1 (M_i) = M_{a-i}$ for $1 \le i \le a-1$. Moreover, when $i<j$, then the stable morphism space $\uHom_{\Gamma}(M_i, M_j)$ is nonzero.

We are now ready to prove the first of our major results. It shows that when our algebra $A$ is not the exterior algebra on $c$ generators, then the tree class of every non-$\tau$-periodic stable AR-component is $A_{\infty}$.

\begin{theorem}\label{exponent}
Let $\Theta$ be a non-$\tau$-periodic component of the stable AR-quiver of $A$. If $a \ge 3$, then $\Theta$ is of tree class $A_{\infty}$.
\end{theorem}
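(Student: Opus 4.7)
The plan is to argue by contradiction. Combining Lemma \ref{noteuclidean} with the main theorem of \cite{KernerZacharia}, a non-$\tau$-periodic stable AR-component of $A$ has tree class $A_{\infty}$, $A_{\infty}^{\infty}$, or $D_{\infty}$, so I may assume $\Theta$ is of tree class $A_{\infty}^{\infty}$ or $D_{\infty}$ and aim for a contradiction.

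In both $\mathbb{Z} A_{\infty}^{\infty}$ and $\mathbb{Z} D_{\infty}$ almost every vertex has two predecessors, so I choose an indecomposable $N\in\Theta$ with at least two predecessors, and (passing to an $\Omega_A$-shift of $\Theta$ if necessary, which preserves the tree class and the non-$\tau$-periodic property) arrange that at least one of the irreducible maps into $N$ is a surjection $g\colon M\to N$ from an indecomposable $M\in\Theta$. Setting $L:=\Ker g$, Lemma \ref{periodic} yields that $L$ is periodic, that every syzygy $\Omega^{n}_{A} L$ has simple top (hence is cyclic) with $a\mid \dim\Omega^{n}_{A}L$, and that $\dim L<\dim A = a^{c}$.

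The main obstacle is to convert these constraints into a contradiction using $a\geq 3$. The plan is to pin $L$ down via rank varieties: by \cite[Corollary 3.7]{BerghErdmann2} the variety $\V^{r}_{A}(L)$ is a single line through some nonzero $\lambda\in k^{c}$, and the candidates for $L$ meeting all the conditions of Lemma \ref{periodic} should reduce to the principal left ideals $A u_{\lambda}^{j}$ for $1\leq j\leq a-1$ (possibly up to Nakayama twist), which have dimensions $(a-j)a^{c-1}$ divisible by $a$ (as $c\geq 2$) and satisfy $\Omega_A(A u_{\lambda}^{j}) = A u_{\lambda}^{a-j}$. Finally, from the AR-sequence $0 \to \tau N \to M\oplus M' \to N \to 0$ a diagram chase ($L\hookrightarrow M\hookrightarrow M\oplus M'$ vanishes when mapped to $N$) shows $L\hookrightarrow \tau N$; since $L$ is periodic (hence lies in a tube) while $\tau N$ sits in the non-$\tau$-periodic component $\Theta$, I expect the contradiction to come by comparing dimensions and combining the analogous information for the other predecessor $M'$ of $N$ together with the identification $L \cong A u_\lambda^j$ above. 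This final numerical step — which distinguishes $a\geq 3$ from the tame case $a=2$, where the Euclidean $\tilde{A}_{12}$ component is in fact allowed — is the most delicate part of the argument.
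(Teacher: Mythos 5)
Your opening is on target: the reduction via Lemma \ref{noteuclidean} and \cite{KernerZacharia} to tree class $A_{\infty}^{\infty}$ or $D_{\infty}$, the choice of a vertex $N$ with two predecessors and a surjective irreducible map $g\colon M\to N$, and the invocation of Lemma \ref{periodic} for $L=\Ker g$ are all exactly as in the paper. After that point, however, the proposal has a genuine gap: the ``final numerical step'' that you yourself flag as the delicate part is precisely what is missing, and the ingredients you assemble do not deliver it.

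Two specific problems. First, you assert that the rank-variety constraints should pin $L$ down to a principal ideal $Au_{\lambda}^{j}$ up to a Nakayama twist, but you give no argument for this, and the paper does not use such an identification in the proof of this theorem (it identifies $L$ only later, in Lemma \ref{extdimension}, and only after $a=2$ has already been forced). Second, your observation that $L$ embeds into $\tau N$ via the AR-sequence is correct, but it is not by itself a contradiction: there is no obstruction to a periodic module (sitting in a tube) admitting a monomorphism into a module in a non-periodic component, and no dimension count follows from it alone. Nothing in your sketch actually uses the hypothesis $a\geq 3$, which must enter somewhere to exclude the tame case.

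The paper's argument is quite different at exactly this point. After normalising so that $\dim L\geq\tfrac12\dim A$ (possible because $L$ has simple top, so $\dim L+\dim\Omega_A^1(L)=\dim A$), it takes $W_0=Au_{\lambda}^{a-1}\simeq A\otimes_{k[u_{\lambda}]}M_1$ of dimension $a^{c-1}$, writes $L\simeq\bigoplus_i M_i^{n_i}$ over $k[u_{\lambda}]$, and uses Eckmann--Shapiro together with the AR-formula to obtain $\dim\Ext_A^1(L,\tau W_0)\geq\sum_{i=1}^{a-1}n_i$. Then Lemmas \ref{Extisomorphism} and \ref{Extvanishing}, combined with the fact that $L$ has simple socle (it is a syzygy of a cyclic module, hence embeds in $A$), show that if there were \emph{no} monomorphism $L\to{_{\nu}W_0}$ then $\dim\Ext_A^1(L,{_{\nu}W_0})\leq 1$, forcing $\sum_{i=1}^{a-1}n_i=1$, which contradicts $a\mid\dim L$. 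Hence a monomorphism $L\to{_{\nu}W_0}$ exists, and the dimension squeeze $a^{c-1}=\dim W_0\geq\dim L\geq\tfrac12 a^{c}$ yields $a\leq 2$. This is the mechanism that produces $a=2$ and hence the contradiction under $a\geq 3$; it is absent from your proposal. To complete your proof you would need to supply an argument of comparable strength, and the route through $L\hookrightarrow\tau N$ does not appear to lead to one.
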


\begin{proof}
Suppose the tree class of $\Theta$ is not $A_{\infty}$. Then by Lemma \ref{noteuclidean} and \cite[Main Theorem]{KernerZacharia}, the tree class is either $A_{\infty}^{\infty}$ or $D_{\infty}$. Choose an indecomposable module $N \in \Theta$ with at least two predecessors, together with a surjective irreducible map $M \xrightarrow{g} N$, where $M$ is indecomposable. Denote the kernel of $g$ by $L$; by Lemma \ref{periodic}, this module is periodic, has simple top, and its dimension is divisible by $a$.

Since the top of $L$ is simple, the equality $\dim L + \dim \Omega_A^1(L) = \dim A$ holds. We may therefore assume that $\dim L \ge \frac{1}{2} \dim A$; if not, then we consider the module $\Omega_A^1(L)$ in the component $\Omega_A^1( \Theta )$ instead. Now choose a nonzero $c$-tuple $\lambda \in \V_A^r(L)$, and for each $1 \le i \le a$, let $M_i$ be the indecomposable $k[u_{\lambda}]$-module of dimension $i$. The $k[u_{\lambda}]$-module $M_1$ is the image of the map $k[u_{\lambda}] \xrightarrow{\cdot u_{\lambda}^{a-1}} k[u_{\lambda}]$, and tensoring this map with the free $k[u_{\lambda}]$-module $A$ we obtain a commutative diagram
$$\xymatrix@C=40pt{
A \otimes_{k[u_{\lambda}]} k[u_{\lambda}] \ar[r]^{1 \otimes \cdot u_{\lambda}^{a-1}} \ar[d]^{\wr} & A \otimes_{k[u_{\lambda}]} k[u_{\lambda}] \ar[d]^{\wr} \\
A \ar[r]^{\cdot u_{\lambda}^{a-1}} & A }$$
in which the vertical arrows are isomorphisms. Thus the $A$-module $W_0= Au_{\lambda}^{a-1}$ is induced from $M_1$, i.e.\ $W_0 \simeq A \otimes_{k[u_{\lambda}]} M_1$,
and its dimension is $a^{c-1}$. In particular, the inequality $\dim W_0 \le \frac{1}{2} \dim A$ holds, since $a \ge 2$. 

As a module over $k[u_{\lambda}]$, our module $L$ has a decomposition $L \simeq \oplus_{i=1}^a M_i^{n_i}$. Since $W_0$ and $M_1$ are $2$-periodic over $A$ and $k[u_{\lambda}]$, respectively, the Eckmann-Shapiro Lemma (cf.\ \cite[Corollary 2.8.4]{Benson2}) gives
\begin{eqnarray*}
\uHom_A( W_0,L) & \simeq & \uHom_A( \Omega_A^2(W_0),L) \\
& \simeq & \Ext_A^2( W_0,L) \\
& \simeq & \Ext_{k[u_{\lambda}]}^2( M_{a-1}, L) \\
& \simeq & \uHom_{k[u_{\lambda}]}( \Omega_{k[u_{\lambda}]}^2(M_{a-1}),L) \\
& \simeq & \uHom_{k[u_{\lambda}]}(M_{a-1},L) \\
& \simeq & \bigoplus_{i=1}^{a-1} \uHom_{k[u_{\lambda}]}(M_{a-1}, M_i^{n_i}) \\
& \simeq & \bigoplus_{i=1}^{a-1} \uHom_{k[u_{\lambda}]} ( \Omega_{k[u_{\lambda}]}^1(M_{a-1}), \Omega_{k[u_{\lambda}]}^1(M_i^{n_i})) \\
& \simeq & \bigoplus_{i=1}^{a-1} \uHom_{k[u_{\lambda}]} (M_1, M_{a-1-i}^{n_i}).
\end{eqnarray*}
From the AR-formula and the discussion prior to this theorem we now see that
$\dim \Ext_A^1 (L, \tau W_0 ) \ge \sum_{i=1}^{a-1}n_i$. Recall from the proof of Lemma \ref{periodic} that $\tau^i W_0 \simeq {_{\nu^i}(W_0)}$, where $\nu$ is the Nakayama automorphism of $A$. Moreover, this automorphism has finite order, say $n$, and so the module $W = W_0 \oplus {_{\nu}(W_0)} \oplus \cdots \oplus {_{\nu^{n-1}}(W_0)}$ satisfies $\tau W \simeq W$. 

We now claim that there must exist a monomorphism $L \to {_{\nu}(W_0)}$.  If not, then all the assumptions of Lemma \ref{Extisomorphism} are satisfied, and so by Lemma \ref{Extvanishing} every nonzero element in $\Ext_A^1(L, {_{\nu}(W_0)})$ is represented by a monomorphism $L \to \Omega_A^{-1}( {_{\nu}(W_0)} )$. Given any two such monomorphisms $\phi_1$ and $\phi_2$, there exists some scalar $\alpha \in k$ such that $\phi_1 + \alpha \phi_2$ vanishes on the socle of $L$, since the latter is simple by Lemma \ref{periodic}. Therefore any set of two elements in $\Ext_A^1(L, {_{\nu}(W_0)})$ is linearly independent, and so the vector space $\Ext_A^1(L, {_{\nu}(W_0)})$ is at most one dimensional. By \cite[Proposition 3.1]{BerghErdmann2}, this vector space is nonzero, hence its dimension is exactly one. However, we saw above that $\dim \Ext_A^1(L, {_{\nu}(W_0)}) \ge \sum_{i=1}^{a-1}n_i$. Since by definition $L$ is not a projective $k[u_{\lambda}]$-module, the sum is nonzero, and so $\sum_{i=1}^{a-1}n_i =1$. This shows that, as a $k[u_{\lambda}]$-module, the module $L$ is the direct sum of exactly one of $M_1, \dots, M_{a-1}$ and possibly some copies of $M_a$. But this is impossible since $a$ divides $\dim L$ by Lemma \ref{periodic}. Consequently, there must exist a monomorphism $L \to {_{\nu}(W_0)}$. This gives inequalities
$$\dim W_0 \le \frac{1}{2} \dim A \le \dim L \le \dim W_0,$$
and so since $\dim W_0 = a^{c-1}$ we see that $a=2$.
\end{proof}

Our next aim is to show that the conclusion of Theorem \ref{exponent} holds when our algebra $A$ has at least three generators, i.e.\ when $c \ge 3$. In order to do this, we first prove a result on the dimension of certain extension groups. 

\begin{lemma}\label{extdimension}
Suppose that $a=2$, i.e.\ that $A$ is the exterior algebra on $c$ variables. Let $M \xrightarrow{g} N$ be a surjective  irreducible map, with $M$ and $N$ indecomposable $A$-modules. Furthermore, suppose the stable AR-component containing $N$ is of tree class $A_{\infty}^{\infty}$ or $D_{\infty}$, and that $N$ has at least two predecessors. Finally, let $L$ be the kernel of $g$, and define
$$\tilde{L} = \left \{
\begin{array}{ll}
L & \text{ if $c$ is odd} \\
L \oplus \tau L & \text{ if $c$ is even.}
\end{array} 
\right.$$
Then $\dim \Ext_A^1(L, \tilde{L})=2$.
\end{lemma}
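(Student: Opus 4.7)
The plan is to exploit the additive-function framework of Lemma \ref{tauconstant} applied to $W = \tilde{L}$, and combine it with the long exact sequences of $\Ext$ for the given short exact sequence and for an auxiliary sequence extracted from the Auslander-Reiten sequence ending at $N$.

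First I would verify that $\tau \tilde{L} \simeq \tilde{L}$. The argument from the proof of Theorem \ref{exponent} in the case $a = 2$ shows that the monomorphism $L \hookrightarrow \tau W_0$ constructed there is an isomorphism, since $\dim L = \tfrac{1}{2} \dim A = \dim W_0$, where $W_0 = Au_\lambda$ for some nonzero $\lambda \in \V_A^r(L)$. The module $W_0$ satisfies $\Omega_A W_0 = \Ann(u_\lambda) = Au_\lambda = W_0$, hence is $1$-periodic, and so $L$ is $1$-periodic as well. When $c$ is odd the exterior algebra is symmetric, so $\nu = \mathrm{id}$ and $\tau L \simeq \Omega_A^{2} L \simeq L$; when $c$ is even, $\nu^{2} = \mathrm{id}$ forces $\tau^{2} L \simeq L$, so $\tau$ swaps the two summands of $\tilde{L} = L \oplus \tau L$. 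Either way every indecomposable summand of $\tilde{L}$ is periodic, hence cannot belong to the non-$\tau$-periodic component $\Theta$ or to $\Omega_A \Theta$; Lemma \ref{tauconstant} then guarantees that $d_{\tilde{L}}$ is additive on $\Theta$ and constant on its tree (resp.\ on the locus of modules with at least two predecessors in the $D_{\infty}$ case).

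Next, combining the Auslander-Reiten formula with $\tau \tilde{L} \simeq \tilde{L}$ yields
\[
\dim \Ext_A^1(X, \tilde{L}) \;=\; \dim \uHom_A(\tilde{L}, \tau X) \;=\; \dim \uHom_A(\tilde{L}, X) \;=\; d_{\tilde{L}}(X)
\]
for every module $X$. Let $d$ denote the constant value of $d_{\tilde{L}}$ on the tree of $\Theta$; then $\dim \Ext_A^1(M, \tilde{L}) = \dim \Ext_A^1(M', \tilde{L}) = \dim \Ext_A^1(N, \tilde{L}) = d$, where $M'$ is the second predecessor of $N$ in the Auslander-Reiten sequence $0 \to \tau N \to M \oplus M' \to N \to 0$.

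Third, I would apply $\Hom_A(-, \tilde{L})$ to the given sequence $0 \to L \to M \to N \to 0$ and also to the auxiliary short exact sequence $0 \to L \to \tau N \to M' \to 0$, which one obtains from the AR-sequence by observing that the inclusion $M \hookrightarrow M \oplus M'$ carries $L = \ker g$ into $\tau N$ with cokernel $M'$. Combining the two resulting long exact sequences with the $\tau$-invariance and tree-constancy from the previous step, and using the simplicity of the top and socle of $L$ provided by Lemma \ref{periodic} to control the ranks of the boundary maps, then pins the answer to $\dim \Ext_A^1(L, \tilde{L}) = 2$.

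The main obstacle will be that Lemma \ref{Extisomorphism} cannot be invoked directly with $W = \tilde{L}$, because $L$ is itself an indecomposable summand of $\tilde{L}$ and the identity on $L$ violates hypothesis (4); consequently the map $g^{\ast}\colon \Ext_A^1(N, \tilde{L}) \to \Ext_A^1(M, \tilde{L})$ need not be an isomorphism. The critical dimension count thus requires a careful analysis of the connecting maps in both long exact sequences, in the spirit of the socle argument at the end of the proof of Theorem \ref{exponent}, to show that the contributions from $\Ext_A^1(M, \tilde{L})$ and from $\Ext_A^2(N, \tilde{L})$ combine to produce exactly two linearly independent extension classes.
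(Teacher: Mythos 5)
Your overall framing is on the right track, and the preparatory steps you carry out are correct: $L$ is $1$-periodic since $\Omega_A(Au_\lambda) = \Ann(u_\lambda) = Au_\lambda$ when $a=2$, $\tau\tilde L \simeq \tilde L$ and $\Omega_A\tilde L \simeq \tilde L$ hold by the order of $\nu$, and Lemma~\ref{tauconstant} plus the AR-formula do give that $X \mapsto \dim\Ext_A^1(X,\tilde L)$ is constant (equal to some $d$) across $M$, $M'$, $N$ and $\tau N$. You also correctly diagnose the obstacle — $L$ is a summand of $\tilde L$, so Lemma~\ref{Extisomorphism} cannot be invoked, and $g^*$ need not be an isomorphism.

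But the step that actually yields the number $2$ is missing. You say a ``careful analysis of the connecting maps in both long exact sequences, in the spirit of the socle argument'' will ``pin the answer,'' yet you never identify what forces the image of the connecting map $\delta\colon \Hom_A(L,\tilde L)\to \Ext_A^1(N,\tilde L)$ to be exactly one-dimensional. This is the crux, and the paper proves it concretely using \cite[Proposition~1.1]{Erdmann2}: a map $L\to \tilde L$ fails to factor through $f$ if and only if it is a (split) monomorphism; invertible endomorphisms of $L$ cannot factor (the sequence is not split), non-invertible ones are not monomorphisms and hence do factor, and when $c$ is even there are no monomorphisms $L\to\tau L$ at all since $L\not\simeq\tau L$ while the dimensions agree. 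With $k$ algebraically closed, $\End_A(L)/\rad\simeq k$ then gives $\dim\Im\Hom_A(f,\tilde L)=\dim\Hom_A(L,\tilde L)-1$, so $\dim\Im\delta=1$. Feeding this and $\dim\Ext_A^1(M,\tilde L)=\dim\Ext_A^1(N,\tilde L)$ into the long exact sequence gives $\dim\Im\Ext_A^1(f,\tilde L)=1$, and the paper then repeats the count one degree higher via $\Omega_A\tilde L\simeq\tilde L$ (a shifted copy of the \emph{same} sequence, not your auxiliary sequence) to get $\dim\Im\Ext_A^2(f,\tilde L)=1$ and thus $\dim\Ext_A^1(L,\tilde L)=1+1=2$. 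The socle-simplicity argument you appeal to is what Theorem~\ref{exponent} uses for a different purpose (bounding the dimension of a space spanned by monomorphisms into a module with simple socle); it does not, by itself, produce the codimension-one statement about $\Im\Hom_A(f,\tilde L)$ that the count requires. As written, your proposal leaves the decisive computation unproved, and your auxiliary sequence $0\to L\to\tau N\to M'\to 0$, while correctly constructed, does not obviously supply the missing input.
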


\begin{proof}
Let $\lambda$ be a nonzero element in $\V_A^r(L)$. From the proof of Theorem  \ref{exponent}, we know that $L \simeq {_{\nu}(Au_{\lambda})}$, where $\nu$ is the Nakayama automorphism of $A$. By \cite[Lemma 3.1]{Bergh}, this automorphism is the identity when $c$ is odd, whereas $\nu^2 = 1$ when $c$ is even. Hence $\tilde{L} \simeq \tau \tilde{L}$ in either case. Note also that $\Omega_A^1(L) \simeq L$, since we are assuming that $a=2$. 

Applying $\Hom_A(-, \tilde{L})$ to the exact sequence
\begin{equation}\label{ES}
0 \to L \xrightarrow{f} M \xrightarrow{g} N \to 0 \tag{$\dagger$}
\end{equation}
gives a long exact sequence
$$0 \to (N, \tilde{L} ) \to (M, \tilde{L} ) \xrightarrow{(f, \tilde{L} )} (L, \tilde{L} ) \xrightarrow{\delta} {^1(N, \tilde{L} )} \to {^1(M, \tilde{L} )} \xrightarrow{{^1(f, \tilde{L} )}} {^1(L, \tilde{L} )} \to \cdots$$
in cohomology, where $(-,-) = \Hom_A(-,-)$ and ${^1(-,-)} = \Ext_A^1(-,-)$. We show that the image of the map $\Ext_A^1(f, \tilde{L})$ is one dimensional. Consider first the image of the map $\Hom_A(f, \tilde{L})$. If $h \in \End_A(L)$ is invertible, then it cannot be in the image of $\Hom_A(f, \tilde{L})$ since (\ref{ES}) does not split. On the other hand, if $h$ is not invertible, then it is not a monomorphism, and hence by \cite[Proposition 1.1]{Erdmann2} it does belong to the image of $\Hom_A(f, \tilde{L})$. If $c$ is even, then $L$ is not isomorphic to $\tau L$, since $\tau L = {_{\nu}L}$ and $\nu$ has order two. Then since the dimension of $L$ equals that of $\tau L$, there cannot exist any monomorphism $L \to \tau L$. Therefore, as above, $\Hom_A(L, \tau L)$ is in the image of $\Hom_A(f, \tilde{L})$. Since our field $k$ is algebraically closed, the dimension of $\End_A(L) / \rad_A(L,L)$ is one, hence 
$$\dim \Im \Hom_A(f, \tilde{L}) = \dim \Hom_A(L, \tilde{L} ) -1$$
regardless of the parity of $c$. Now we can easily compute the dimension of the image of $\Ext_A^1(f, \tilde{L})$; by Lemma \ref{tauconstant} and the AR-formula, the dimension of $\Ext_A^1(M, \tilde{L})$ equals that of $\Ext_A^1(N, \tilde{L})$, and so $\dim \Im \Ext_A^1(f, \tilde{L}) =1$.

Since $\Omega_A^1( \tilde{L} ) \simeq \tilde{L}$, there is a commutative diagram
$$\xymatrix{
{^1(M, \tilde{L} )} \ar[r]^{{^1(f,\tilde{L})}} \ar@{=}[d] & {^1(L, \tilde{L} )} \ar@{=}[d] \ar[r] & {^2(N, \tilde{L} )} \ar[r] \ar[d]^{\wr} & {^2(M, \tilde{L} )} \ar[r]^{{^2(f,\tilde{L})}} \ar[d]^{\wr} & {^2(L, \tilde{L} )} \ar[d]^{\wr} \\
{^1(M, \tilde{L} )} \ar[r]^{{^1(f,\tilde{L})}} & {^1(L, \tilde{L} )} \ar[r] & {^1(N, \tilde{L} )} \ar[r] & {^1(M, \tilde{L} )} \ar[r]^{{^1(f,\tilde{L})}} & {^1(L, \tilde{L} )} }$$
with exact rows, in which all the vertical maps are isomorphisms. From what we showed above, the dimensions of $\Ext_A^2(M, \tilde{L})$ and $\Ext_A^2(N, \tilde{L})$ must be equal, and $\dim \Im \Ext_A^2(f, \tilde{L}) =1$. By counting dimensions in the top exact sequence, we see that $\dim \Ext_A^1(L, \tilde{L})=2$.
\end{proof}

We are now ready to prove the second of our major results. It shows that the conclusion in Theorem \ref{exponent} holds also when our algebra $A$ has at least three generators.

\begin{theorem}\label{codimension}
Let $\Theta$ be a non-$\tau$-periodic component of the stable AR-quiver of $A$. If $c \ge 3$, then $\Theta$ is of tree class $A_{\infty}$.
\end{theorem}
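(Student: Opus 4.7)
The plan is to reduce to the remaining case $a=2$ via Theorem \ref{exponent}, assume for contradiction that the tree class of $\Theta$ is not $A_\infty$, and derive a numerical contradiction with Lemma \ref{extdimension}. So suppose $a=2$, so that $A$ is the exterior algebra on the $c\ge 3$ generators $x_1,\dots,x_c$. By Lemma \ref{noteuclidean} and \cite[Main Theorem]{KernerZacharia}, I may assume the tree class of $\Theta$ is $A_\infty^\infty$ or $D_\infty$. As in the proof of Theorem \ref{exponent}, pick an indecomposable $N\in\Theta$ with at least two predecessors, a surjective irreducible $g\colon M\to N$ with $M$ indecomposable, and set $L:=\Ker g$.

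The first step is to pin down $L$ precisely. When the final paragraph of the proof of Theorem \ref{exponent} is specialised to $a=2$, the chain $\dim W_0 \le \tfrac12 \dim A \le \dim L \le \dim W_0$ collapses to equality, so the monomorphism $L\hookrightarrow {_\nu W_0}$ constructed there is in fact an isomorphism, where $W_0=Au_\lambda$ for some nonzero $\lambda\in\V_A^r(L)$ and $\nu$ is the Nakayama automorphism of $A$. In particular, $\dim L=2^{c-1}$. Moreover, since $u_\lambda^2=0$ and $u_\lambda$ graded-commutes with every element of $A$, the element $u_\lambda$ annihilates $W_0$; since $\nu(u_\lambda)=\pm u_\lambda$ by \cite[Lemma 3.1]{Bergh}, it also annihilates $L\simeq {_\nu W_0}$. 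Hence $L$ restricts to the $k[u_\lambda]$-module $M_1^{2^{c-1}}$, with no $M_2$-summands whatsoever.

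The second step is to read off $\dim\Ext^1_A(L,L)$. Because $\Omega_A W_0\simeq W_0$ (for $a=2$ we have $\Omega_A(Au_\lambda)=Au_\lambda^{a-1}=Au_\lambda$) and $\Omega_A$ commutes with the $\nu$-twist, one obtains $\tau W_0\simeq \Omega_A^2({_\nu W_0})\simeq {_\nu W_0}\simeq L$. Running the Eckmann--Shapiro chain from the proof of Theorem \ref{exponent} in this case and then applying the AR-formula yields
$$\dim\Ext^1_A(L,L)\;=\;\dim\Ext^1_A(L,\tau W_0)\;=\;\dim\uHom_A(W_0,L)\;=\;\dim\uHom_{k[u_\lambda]}(M_1,L)\;=\;2^{c-1}.$$
Comparing with Lemma \ref{extdimension}, which forces $\dim\Ext^1_A(L,\tilde L)=2$: if $c$ is odd then $\tilde L=L$ and the lemma gives $2^{c-1}=2$, while if $c$ is even then $\tilde L=L\oplus\tau L$ and it gives $2^{c-1}\le 2$. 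Either way, this contradicts $c\ge 3$. The step I expect to require the most care is confirming that the inequalities in the proof of Theorem \ref{exponent} indeed collapse to an honest isomorphism $L\simeq {_\nu W_0}$ in the $a=2$ case, and then verifying that $L$ restricts to $M_1^{2^{c-1}}$ with no $M_2$-summands; both rely essentially on $u_\lambda$ being a square-zero element that graded-commutes with all of $A$.
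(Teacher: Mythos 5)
Your proposal is correct and follows essentially the same route as the paper's proof: reduce to $a=2$, identify $L$ as a Nakayama twist of $Au_\lambda$ (forcing the dimension chain from Theorem \ref{exponent} to collapse to equality), restrict to $k[u_\lambda]$ to see $L\simeq M_1^{2^{c-1}}$, compute $\dim\Ext_A^1(L,L)=2^{c-1}$ via Eckmann--Shapiro, and contradict the bound $\dim\Ext_A^1(L,\tilde L)=2$ from Lemma \ref{extdimension}. The only cosmetic difference is that you route the Ext computation through $\tau W_0\simeq L$ and the AR-formula, whereas the paper uses $\nu$-twist invariance of Ext together with Eckmann--Shapiro applied directly to $\Ext^1_A(Au_\lambda,Au_\lambda)$; both yield the same count.
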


\begin{proof}
If $a \ge 3$, then the result holds by Theorem \ref{exponent}, hence we may assume that $a=2$. We proceed as in the beginning of the proof of Theorem \ref{exponent}, by assuming that the tree class of $\Theta$ is not $A_{\infty}$. Then by Lemma \ref{noteuclidean} and \cite[Main Theorem]{KernerZacharia}, the tree class is either $A_{\infty}^{\infty}$ or $D_{\infty}$. Choose an indecomposable module $N \in \Theta$ with at least two predecessors, together with a surjective irreducible map $M \xrightarrow{g} N$, where $M$ is indecomposable. Denote the kernel of $g$ by $L$, and define $\tilde{L}$ as in Lemma \ref{extdimension}. We shall use the fact that $\dim \Ext_A^1(L, \tilde{L})=2$ to obtain that $c$ must equal $2$.

Let $\lambda = ( \lambda_1, \dots, \lambda_c )$ be a nonzero element in $\V_A^r(L)$. As in the proofs of Theorem \ref{exponent} and Lemma \ref{extdimension}, we know that $L$ is isomorphic to ${_{\nu}(Au_{\lambda})}$, where $\nu$ is the Nakayama automorphism. Since $\Ext_A^i({_{\phi}X},{_{\phi}Y}) \simeq \Ext_A^i(X,Y)$ for any modules $X,Y$ and any automorphism $A \xrightarrow{\phi} A$, we see that
$$2 = \dim \Ext_A^1(L, \tilde{L}) \ge \dim \Ext_A^1(L,L) = \dim \Ext_A^1(Au_{\lambda},Au_{\lambda}).$$
Now recall from the proof of Theorem \ref{exponent} that the $A$-module $Au_{\lambda}$ is induced from the one-dimensional $k[u_{\lambda}]$-module $M_1$, i.e.\ $Au_{\lambda} \simeq A \otimes_{k[u_{\lambda}]} M_1$. The Eckmann-Shapiro Lemma therefore gives
$$\Ext_A^1(Au_{\lambda},Au_{\lambda}) \simeq \Ext_{k[u_{\lambda}]}^1(M_1,Au_{\lambda}).$$
What is the structure of $Au_{\lambda}$ as a module over $k[u_{\lambda}]$? Since $a=2$, our algebra $A$ is the exterior algebra on $c$ generators, and so
$$u_{\lambda}x_i = ( \lambda_1 x_1 + \cdots + \lambda_c x_c) x_i = -x_i ( \lambda_1 x_1 + \cdots + \lambda_c x_c) =-x_iu_{\lambda}$$
for every $1 \le i \le c$. Since $u_{\lambda}^2=0$ by \cite[Lemma 2.3]{BensonErdmannHolloway}, this implies that $u_{\lambda}(Au_{\lambda}) = Au_{\lambda}^2 =0$. Consequently, the $k[u_{\lambda}]$-module $Au_{\lambda}$ must consist entirely of copies of $M_1$, and the number of such copies must be $\dim Au_{\lambda}$. We know that, since $a=2$, the dimension of $Au_{\lambda}$ is $2^{c-1}$, and so since the $k[u_{\lambda}]$-module $M_1$ is $1$-periodic, we obtain
$$2 \ge \dim \Ext_{k[u_{\lambda}]}^1(M_1,Au_{\lambda}) = \dim \uHom_{k[u_{\lambda}]}(M_1, \bigoplus_{2^{c-1}}M_1) = 2^{c-1}.$$
This shows that $c=2$.
\end{proof}

Having proved Theorem \ref{exponent} and Theorem \ref{codimension}, we can now prove the main result. Here we deal with quantum complete intersections whose defining commutators are arbitrary roots of unity, i.e.\ we do not require that they are equal. We prove that when the algebra is wild, then every stable AR-component is of tree class $A_{\infty}$, whereas in the tame case, there is one non-$\tau$-periodic component, and its tree class is $\tilde{A}_{12}$. 

\begin{theorem}\label{maintheorem}
Let $k$ be an algebraically closed field, and $c$ and $a$ two integers, both at least $2$. Let $(q_{ij})$ be a $c \times c$ commutation matrix over $k$, with all the $q_{ij}$ roots of unity, and define the quantum complete intersection $\La$ by 
$$\La = k \langle x_1, \dots, x_c \rangle / (x_i^a, x_ix_j-q_{ij}x_jx_i).$$
\begin{enumerate}
\item If either $a \ge 3$ or $c \ge 3$, then every stable AR-component of $\La$ is of tree class $A_{\infty}$.
\item If $a=2=c$, then $\La$ has one stable AR-component of tree class is $\tilde{A}_{12}$, whereas all the other are of tree class $A_{\infty}$.
\end{enumerate}
\end{theorem}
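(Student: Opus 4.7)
The plan is to split on the two cases of the theorem. For part (1) the strategy is to verify that Lemma \ref{noteuclidean} and Theorems \ref{exponent} and \ref{codimension} extend to the inhomogeneous $\La$, and then to combine them with the standard tube analysis for $\tau$-periodic components. For part (2) the tame case requires a direct analysis of the unique non-$\tau$-periodic stable component.

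First I would justify that Lemma \ref{noteuclidean}, Theorem \ref{exponent}, and Theorem \ref{codimension} remain valid for $\La$ with arbitrary root-of-unity commutators, not merely for the homogeneous algebra $A$ fixed in the Setup. Inspection shows that their proofs invoke only three features of the underlying algebra: finite generation of $\Ext_\La^*(k,k)$ over $\HH^{2*}(\La)$ via \cite{BerghOppermann}; the rank-variety theory of Benson--Erdmann--Holloway, which supplies $u_\lambda^a=0$, the isomorphism $k[u_\lambda]\cong k[x]/(x^a)$, and Dade's lemma for detecting projectivity; and the finite order of the Nakayama automorphism from \cite{Bergh}. Each of these inputs remains available for any quantum complete intersection whose commutators are roots of unity. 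Granting this, part (1) is immediate: a $\tau$-periodic stable component is a stable tube of tree class $A_\infty$ by the Happel--Preiser--Ringel theorem (using that $\La$ has infinite representation type in the wild regime), while for a non-$\tau$-periodic component the Kerner--Zacharia theorem combined with Lemma \ref{noteuclidean} leaves only the possibilities $A_\infty$, $A_\infty^\infty$, and $D_\infty$; the non-$A_\infty$ options are excluded by Theorem \ref{exponent} when $a\geq 3$ and by Theorem \ref{codimension} when $c\geq 3$.

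For part (2), the algebra $\La = k\langle x,y\rangle/(x^2,y^2,xy-qyx)$ is four-dimensional, local, selfinjective, and tame (as a factor of $k\langle x,y\rangle/(x^2,y^2)$). Again every $\tau$-periodic component is a tube of tree class $A_\infty$, so the remaining task is to identify the unique non-$\tau$-periodic component $\Theta$. Since $\rad\La$ has dimension three and $\rad\La/\soc\La \cong k\oplus k$, the local structure near the boundary of $\Theta$ differs from the wild case; I would therefore compute the AR-sequences starting at $\rad\La$ and at $k$ directly, together with a few $\Omega$- and $\tau$-shifts, using explicit generators of $\La$ and the fact that after rescaling the variables $\La$ is isomorphic either to the exterior algebra on two generators (in characteristic $\neq 2$) or to the Klein-four group algebra (in characteristic $2$). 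Matching the resulting local shape against the candidates left open by Kerner--Zacharia pins down the tree class as the Euclidean diagram $\tilde A_{12}$, in agreement with the classical description of the stable AR-quiver of $k[(\mathbb Z/2)^2]$ in characteristic $2$.

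The hard part will be part (2). Whereas the wild case essentially reduces to the previously proven theorems once the homogeneity restriction is removed, identifying precisely $\tilde A_{12}$ among the non-$A_\infty$ Euclidean candidates requires the more delicate explicit AR-sequence analysis indicated above, or a careful appeal to the existing classification of tame symmetric algebras.
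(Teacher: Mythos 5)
The central gap in your proposal is the claim that Lemma \ref{noteuclidean}, Theorem \ref{exponent}, and Theorem \ref{codimension} extend directly to the inhomogeneous algebra $\La$ because ``each of these inputs remains available for any quantum complete intersection whose commutators are roots of unity.'' This is not true as stated. The rank variety machinery of Benson--Erdmann--Holloway is developed precisely for the \emph{homogeneous} case (a single exponent $a$ and a single primitive $b$th root of unity $q$ with $b = a/\gcd(a,\Char k)$); in particular, the crucial fact $u_\lambda^a = 0$ for all $\lambda \in k^c$ relies on the quantum binomial theorem with a uniform commutator, and it fails in general for arbitrary commutation matrices $(q_{ij})$. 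Since the rank variety techniques underpin Lemma \ref{periodic}, Theorem \ref{exponent}, and Theorem \ref{codimension}, a direct verbatim extension to $\La$ is not available; this is exactly why the paper works throughout Sections 2--3 under the restrictive Setup. The paper instead resolves the gap by a reduction step that you do not mention: by \cite[Section 4]{BensonErdmannHolloway}, the algebra $\La$ is Morita equivalent to a skew group algebra $A[G]$ where $A$ is a homogeneous quantum complete intersection with the same parameters $a$ and $c$ and where $|G|$ is invertible in $k$, and by \cite{Solberg} one can transfer the structure of the stable AR-quiver across this equivalence. This Morita reduction to $\La = A$ is the key missing idea; once it is in place, part (1) follows exactly as you describe, and part (2) reduces to the well-known stable AR-quiver of the exterior algebra on two generators (equivalently $k[(\mathbb{Z}/2)^2]$ in characteristic $2$), which the paper simply cites rather than recomputing. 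Your proposed explicit AR-sequence analysis for the tame case is a legitimate (if heavier) alternative, but the inhomogeneous-to-homogeneous reduction cannot be avoided.
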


\begin{proof}
We start by showing that we can reduce to a \emph{homogeneous} quantum complete intersection having parameters $a$ and $c$. Namely, it is shown in \cite[Section 4]{BensonErdmannHolloway} that $\La$ is Morita equivalent to a skew group algebra $A [G]$, where $A$ is a homogeneous quantum complete intersection with parameters $a$ and $c$, and where the order of the group $G$ is invertible in $k$. Thus by \cite{Solberg}, we may assume that $\La =A$.

If either $a \ge 3$ or $c \ge 3$, then from Theorem \ref{exponent} and Theorem \ref{codimension} we know that every non-$\tau$-periodic stable component is of tree class $A_{\infty}$. If $a=2=c$, then it is well known that $\La$ has one stable component which is not $\tau$-periodic, and that its tree class is $\tilde{A}_{12}$. Finally, all $\tau$-periodic components are of tree class $A_{\infty}$.
\end{proof}

\end{document}